\documentclass[11pt,english]{article}
\usepackage[latin9]{inputenc}
\usepackage[letterpaper]{geometry}
\geometry{verbose,lmargin=2cm,rmargin=2cm}
\usepackage{amsmath,amssymb,amsthm}
\usepackage{graphicx}
\usepackage{setspace}
\usepackage{babel}
\usepackage{float}

\setlength{\textwidth}{16.5cm}
\setlength{\textheight}{22cm}
\setlength{\topmargin}{-1.5cm}

\theoremstyle{plain}
\newtheorem{theorem}{Theorem}
\newtheorem*{theorem*}{Theorem}

\newtheorem*{corollary*}{Corollary}
\newtheorem{definition}{Definition}
\newtheorem*{remark*}{Remark}

\def\mathscr{\mathfrak}

\newcommand{\ds}{\displaystyle}

\def\R{{\mathbb R}}

\def\Ker{\mathrm{Ker\,}}
\def\Ran{\mathrm{Ran\,}}

\def\l{\lambda}

\def\pfi{\varphi}
\def\D{\mathcal{D}}
\def\d{\partial}
\def\-{\backslash}

\def\ds{\displaystyle}


\makeatletter

\date{}

\makeatother

\begin{document}

\title{A short proof of the Arendt-Chernoff-Kato theorem}

\author{Sergiy Koshkin\\
 Department of Computer and Mathematical Sciences\\
 University of Houston-Downtown\\
 One Main Street\\
 Houston, TX 77002\\
 e-mail: koshkins@uhd.edu}
\maketitle
\begin{abstract}\

We give a short new proof of the Arendt-Chernoff-Kato theorem, which characterizes generators of positive $C_0$ semigroups in order unit spaces. The proof avoids half-norms and subdifferentials, and is based on a sufficient condition for an operator to have positive inverse, which is new even for matrices.
\bigskip

\textbf{Keywords}: ordered Banach space, positive cone, order unit, positive off-diagonal, $C_0$ semigroup 
\end{abstract}

Let $A$ be a matrix with non-negative off-diagonal entries, such matrices are called positive off-diagonal, and consider the equation $Ax=-z$, where $z$ is a vector with strictly positive entries. A classical result states that if this equation has a positive solution $x$ then $-A^{-1}$ exists and has non-negative entries \cite[23.1]{Coll}. As shown below, the result generalizes to operators in order unit spaces. Moreover, it turns out that negative invertibility follows from positive solvability of $Ax=-z$ even if $A$ is not positive off-diagonal, but has a weaker property that we call somewhere positivity. Unlike positivity off-diagonal, somewhere positivity makes sense even for operators between different spaces. Based on this generalization we give a simple proof of the Arendt-Chernoff-Kato theorem without using half-norms or subdifferentials as in \cite{ACK}. Recall that the theorem characterizes generators of positive $C_0$ semigroups on order unit spaces as densely defined positive off-diagonal linear operators that satisfy a range condition. In addition to simplifying the proof we replace the positive off-diagonal property with an a priori weaker one, which is easier to verify.

Let $X$ be a real Banach space partially ordered by a closed proper cone $X^+$ with non-empty interior, 
$\mathrm{int\,}X^+$ and $\d X^+$ denote the interior and the boundary of $X^+$ respectively. If $X^+$ is also normal, i.e. order intervals are norm bounded, then $X$ is called an order unit space. In such spaces any element $e\in\mathrm{int\,}X^+$ is called an order unit and defines an equivalent norm $||x||_e:=\inf\{\l>0\,|-\l e\leq x\leq\l e\}$ on $X$ \cite[A.2.7]{Clem}. The dual cone $X^{*+}:=\{\pfi\in X^{*}|\,\langle\pfi,x\rangle\geq0\text{ for all }x\in X^+\}$ then is also closed, proper and normal in the dual norm. An element $x\in X^+$ is quasi-interior if $\langle\pfi,x\rangle>0$ for all $\pfi\in X^{*+}\-\{0\}$. By a theorem of Krein, in order unit spaces any quasi-interior element is an order unit \cite[A.2.10]{Clem}, but such elements may exist even if $\mathrm{int\,}X^+=\emptyset$. Let $X$ be an order unit space and $Y$ be an ordered Banach space.
\begin{definition}  A densely defined linear operator $A:\D_A\to Y$ with domain $\D_A\subseteq X$ is called somewhere positive if for every $x\in\D_A\cap\d X^+$ there is $\psi\in Y^{*+}\-\{0\}$ such that $\langle\psi,Ax\rangle\geq0$.
\end{definition}
If $X=\R^n$ and $Y=\R^m$ with the standard order a somewhere positive matrix $A$ must have a non-negative entry in every column, but this condition is not sufficient. One sufficient condition is that after deleting any column from $A$ the resulting matrix has a positive row. There does not appear to be a simple condition on matrix entries that is both necessary and sufficient. We now relate positive solvability of $Ax=-z$ to inverse positivity of 
$-A$ for somewhere positive $A$.
\begin{theorem}\label{qint} Let $A:\D_A\to Y$ be a somewhere positive operator and $z\in Y^{+}$ be a quasi-interior element. If the equation $Ax=-z$ has an order unit solution $x=e\in\mathrm{int\,}X^+$ then $A$ is inverse positive, i.e. 
$Ax\leq0$ implies $x\geq0$, and $\Ker A=\{0\}$. If additionally $Y^{+}$ is normal and $\Ran A=Y$, then $A$ is invertible and $-A^{-1}\geq0$.
\end{theorem}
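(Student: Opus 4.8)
The plan is to prove inverse positivity first and then read off the other two assertions. Suppose $x\in\D_A$ satisfies $Ax\leq0$; I want to conclude $x\geq0$. Since $e$ is an order unit, $x\geq-||x||_e\,e$, so $x+te\geq0$ for all sufficiently large $t$ (e.g. $t=||x||_e$), and hence $t_0:=\inf\{t\geq0\,|\,x+te\geq0\}$ is well defined with $t_0\geq0$. Because $X^+$ is closed the infimum is attained, i.e. $w:=x+t_0e\in X^+$. If $t_0=0$ then $x=w\geq0$ and we are done, so I assume $t_0>0$ and aim for a contradiction.

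First I would check that $w$ lies on $\d X^+$: if instead $w\in\mathrm{int\,}X^+$, then (as $e$ is fixed) the point $w-se=x+(t_0-s)e$ stays in $X^+$ for all small $s>0$, contradicting the minimality of $t_0$. Thus $w\in\D_A\cap\d X^+$ (note $w\in\D_A$ since $\D_A$ is a subspace containing both $x$ and $e$), and somewhere positivity supplies a functional $\psi\in Y^{*+}\-\{0\}$ with $\langle\psi,Aw\rangle\geq0$. On the other hand, using $Ae=-z$ and linearity, $Aw=Ax-t_0z$, so
\[
\langle\psi,Aw\rangle=\langle\psi,Ax\rangle-t_0\langle\psi,z\rangle .
\]
Here $\langle\psi,Ax\rangle\leq0$ because $\psi\in Y^{*+}$ and $Ax\leq0$, while $t_0\langle\psi,z\rangle>0$ because $t_0>0$, the element $z$ is quasi-interior, and $\psi\neq0$; hence $\langle\psi,Aw\rangle<0$, the desired contradiction. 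Therefore $t_0=0$ and $x\geq0$, which is inverse positivity. Triviality of the kernel is then immediate: if $Ax=0$ then both $Ax\leq0$ and $A(-x)\leq0$, so $x\geq0$ and $-x\geq0$, whence $x=0$ since $X^+$ is proper.

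For the final assertion, $\Ker A=\{0\}$ together with $\Ran A=Y$ makes $A:\D_A\to Y$ a bijection, so $A^{-1}$ is defined on all of $Y$. To see $-A^{-1}\geq0$, take $y\in Y^{+}$ and set $x:=-A^{-1}y$; then $Ax=-y\leq0$, so inverse positivity gives $x\geq0$, i.e. $-A^{-1}y\geq0$. It remains to show $A^{-1}$ is bounded, and this is where I expect the real work: the everywhere-defined operator $-A^{-1}:Y\to X$ is positive, and I would deduce its continuity from the normality of $Y^+$ via the automatic continuity of positive linear operators into the order unit space $X$ (the one external ingredient). The main obstacle is thus not the algebra of inverse positivity but this boundedness step. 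Indeed, the order-unit estimate that the preceding argument produces naturally, namely $||{-A^{-1}}y||_e\leq\inf\{\l>0\,|\,-\l z\leq y\leq\l z\}$, only controls $A^{-1}$ on the principal ideal generated by $z$ (the right-hand side is $+\infty$ for $y$ outside it), so bridging the gap to all of $Y$ is precisely what normality of $Y^+$ must be invoked to achieve.
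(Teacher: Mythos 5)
Your proof of inverse positivity is correct and is essentially the paper's own argument: the paper travels along the segment $(1-t)x+te$ and uses a point where it crosses $\d X^+$, while you travel along the ray $x+te$ and use the first entry time $t_0$ into the cone; since $X^+$ and $\d X^+$ are invariant under positive scaling, the two constructions agree up to reparametrization, and the contradiction is identical --- somewhere positivity gives $\psi\in Y^{*+}\-\{0\}$ with $\langle\psi,A(x+t_0e)\rangle\geq0$, while $\langle\psi,Ax\rangle\leq0$ and $t_0\langle\psi,z\rangle>0$ force the strict opposite inequality. The kernel argument and the deduction of $-A^{-1}\geq0$ from inverse positivity also coincide with the paper's.

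The gap is in the final step, where $-A^{-1}:Y\to X$ must be shown bounded. The principle you invoke --- automatic continuity of a positive operator deduced ``from the normality of $Y^+$'' and the order-unit structure of the target $X$ --- is false as stated: the trivial cone $\{0\}$ is closed, proper and normal, every linear operator is positive with respect to it, and unbounded operators out of an infinite-dimensional space always exist. The correct external result, the one the paper cites \cite[A.2.11]{Clem}, distributes the hypotheses the other way: a positive linear operator from a Banach space with a closed \emph{generating} cone into a space with a \emph{normal} cone is bounded. Applied to $-A^{-1}:Y\to X$, the work is therefore done by $Y^+$ being generating (normality of $X^+$ is automatic in an order unit space), not by normality of $Y^+$; note that the existence of the quasi-interior element $z$ only forces $Y^+-Y^+$ to be dense in $Y$, which is weaker than generating. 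Your closing observation cannot repair this: the estimate $||{-A^{-1}}y||_e\leq\inf\{\l>0\,|-\l z\leq y\leq\l z\}$ is true, but normality makes that infimum dominate a fixed multiple of $||y||$ (if $-\l z\leq y\leq\l z$ then $y/\l$ lies in the norm-bounded order interval $[-z,z]$), so it is a \emph{finer} norm than the one inherited from $Y$; the estimate thus gives continuity only in the wrong topology and only on the principal ideal of $z$, and no density argument can upgrade it to continuity on all of $Y$. (In the paper's intended application, Theorem \ref{ACK}, one has $Y=X$ an order unit space, whose cone is both generating and normal, so the cited result applies verbatim there.)
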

\begin{proof}By assumption, there is $e\in\D_A\cap\mathrm{ int\,}X^+$ such that $Ae=-z$. Suppose by contradiction that there is some $x\in\D_A$ with $Ax\leq0$ but $x\not\in X^+$. Define a linear path $x_t:=(1-t)x+te$ connecting $x$ to $e$.
We have $x_0=x\not\in X^+$ and $x_1=e\in\mathrm{int\,}X^+$, so there is a borderline value $t_b\in(0,1)$ such that 
$x_{t_b}\in\d X^+$. Since $A$ is somewhere positive there is $\psi\in Y^{*+}\-\{0\}$ such that 
$\langle\psi,Ax_{t_b}\rangle\geq0$. On the other hand, $\langle\psi,z\rangle>0$ since $z$ is a quasi-interior element, and 
$Ax\leq0$, so
$$
\langle\psi,Ax_{t_b}\rangle=(1-t_b)\langle\psi,Ax\rangle+t_b\,\langle\psi,Ae\rangle
=(1-t_b)\langle\psi,Ax\rangle-t_b\,\langle\psi,z\rangle<0.
$$
This is the contradiction proving inverse positivity: $t_b$ can not exist and $x\in X^+$ .

Now let $x\in\Ker A$, then $Ax=A(-x)=0$. By inverse positivity we have both $x\geq0$ and $-x\geq0$, so $x=0$. 
If $\Ran A=Y$ then $A$ has an algebraic inverse on $Y$, and $-A^{-1}\geq0$ again by inverse positivity. However, any positive linear operator from a space with a generating cone to a space with a normal cone is bounded 
\cite[A.2.11]{Clem}, hence $A$ has a bounded inverse.
\end{proof}
\noindent Theorem \ref{qint} appears to be new even for matrices, where the stronger property of positive off-diagonal is usually required to derive inverse positivity, see \cite[23.1]{Coll}. The following definition is a well-known generalization of this property to infinite-dimensional spaces \cite[7.1]{Clem}.
\begin{definition} Let $X$ be an order unit space. A densely defined linear operator 
$A:\D_A\to X$ with domain $\D_A\subseteq X$ is positive off-diagonal if for every $x\in\D_A\cap\d X^+$ and  
every $\pfi\in X^{*+}\-\{0\}$:  $\langle\pfi,x\rangle=0\implies\langle\pfi,Ax\rangle\geq0$.
\end{definition}
\noindent If $x\in\d X^+$ then there always exists $\pfi\in X^{*+}\-\{0\}$ such that $\langle\pfi,x\rangle=0$, otherwise $x$ would be quasi-interior and then interior since $X$ is an order unit space. Therefore, any positive off-diagonal operator is somewhere positive. The converse is not true even for $X=Y=\R^n$ with the standard order. For $n\times n$ matrices positive off-diagonal literally means non-negative off-diagonal entries \cite[7.19]{Clem}, so 
$A=\begin{pmatrix}1&-2\\-2&1\end{pmatrix}$ is somewhere positive, but obviously not positive off-diagonal. Moreover, it also satisfies the solvability assumption of Theorem \ref{qint} with $z=(1,1)^T$, and therefore has negative inverse.

It is the positive off-diagonal property that appears in the Arendt-Chernoff-Kato characterization of generators of positive $C_0$ semigroups on order unit spaces \cite{ACK}, \cite[7.29]{Clem}. A key difference with somewhere positivity is that positive off-diagonal is preserved under subtraction of scalar operators, i.e. if $A$ is positive off-diagonal then so is $A-\l I$ for any $\l>0$. The same holds for the following ostensibly weaker property. 
\begin{definition} Let $X$ be an order unit space. A densely defined linear operator 
$A:\D_A\to X$ with domain $\D_A\subseteq X$ is somewhere positive off-diagonal if for every $x\in\D_A\cap\d X^+$ there exists $\pfi\in X^{*+}\-\{0\}$ such that $\langle\pfi,x\rangle=0$ and $\langle\pfi,Ax\rangle\geq0$. 
\end{definition}
\noindent Since $\langle\pfi,(A-\l I)x\rangle=\langle\pfi,Ax\rangle-\l\langle\pfi,x\rangle=\langle\pfi,Ax\rangle\geq0$
it follows that $A-\l I$ is somewhere positive off-diagonal along with $A$ for any $\l>0$\,. 

Our proof of the Arendt-Chernoff-Kato theorem is a simple application of Theorem \ref{qint}, which also weakens the positive off-diagonal assumption to somewhere positive off-diagonal. The key idea is that for large enough $\l>0$ the equation $(A-\l I)x=-z$ is solvable for some order unit $z$ with a solution $x$ itself also being an order unit. We will also make use of the fact that if $B$ is a bounded linear operator between order unit spaces with units $e$ and $\epsilon$ respectively, then $||B||=||Be||_{\epsilon}$, where $||\cdot||$ is the induced operator norm. The proof is straightforward from definitions and is left to the reader.
\begin{theorem}[Arendt-Chernoff-Kato]\label{ACK} Let $X$ be an order unit space and $A$ be a densely defined linear operator on $X$. Then the following conditions are equivalent:

{\rm(i)} $A$ generates a positive $C_0$ semigroups on $X$;

{\rm(ii)} $A$ is positive off-diagonal and $\Ran(A-\l I)=X$ for all large $\l>0$;

{\rm(iii)} $A$ is somewhere positive off-diagonal and $\Ran(A-\l I)=X$ for all large $\l>0$;

{\rm(iv)} $\l$ is in the resolvent set of $A$, and $(\l I-A)^{-1}\geq0$ for all large $\l>0$.
\end{theorem}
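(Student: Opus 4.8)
The plan is to prove the cycle (i)$\Rightarrow$(ii)$\Rightarrow$(iii)$\Rightarrow$(iv)$\Rightarrow$(i), with the new content concentrated in (iii)$\Rightarrow$(iv), which is essentially a one-line application of Theorem \ref{qint}. The step (ii)$\Rightarrow$(iii) is immediate: a boundary point $x\in\D_A\cap\d X^+$ always admits a supporting functional $\pfi\in X^{*+}\-\{0\}$ with $\langle\pfi,x\rangle=0$, so the universal condition defining positive off-diagonal specializes to the existential one defining somewhere positive off-diagonal.

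For (i)$\Rightarrow$(ii) I would invoke only standard semigroup theory. The range condition is contained in the Hille--Yosida theorem: a generator has $(\omega,\infty)$ in its resolvent set for some $\omega$, so $\l I-A$, and hence $A-\l I$, is onto $X$ for all large $\l$. For positive off-diagonality, fix $x\in\D_A\cap\d X^+$ and $\pfi\in X^{*+}\-\{0\}$ with $\langle\pfi,x\rangle=0$. Positivity of the semigroup gives $T(t)x\in X^+$, so $\langle\pfi,T(t)x\rangle\geq0$ and
$$\langle\pfi,Ax\rangle=\lim_{t\to0^+}\frac{\langle\pfi,T(t)x\rangle-\langle\pfi,x\rangle}{t}=\lim_{t\to0^+}\frac{\langle\pfi,T(t)x\rangle}{t}\geq0.$$

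The heart of the argument is (iii)$\Rightarrow$(iv). Choose an order unit $e\in\D_A\cap\mathrm{int\,}X^+$, which exists because $\D_A$ is dense and $\mathrm{int\,}X^+$ is open and nonempty. Since $e$ is interior and $\l^{-1}Ae\to0$ in norm, the element $z:=\l e-Ae=\l(e-\l^{-1}Ae)$ remains in $\mathrm{int\,}X^+$ for all large $\l$; thus $z$ is an order unit, in particular quasi-interior. The operator $B:=A-\l I$ inherits somewhere positive off-diagonality, hence somewhere positivity, from $A$, and the identity $Be=-z$ exhibits the order unit $e$ as a solution of $Bx=-z$. As $\Ran B=\Ran(A-\l I)=X$ is given and $X^+$ is normal, Theorem \ref{qint} applies directly and gives that $B$ is invertible with $-B^{-1}\geq0$. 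Therefore $\l$ lies in the resolvent set and $(\l I-A)^{-1}=-B^{-1}\geq0$ for all large $\l$.

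The remaining work, and where I expect the only genuine effort, is the classical generation step (iv)$\Rightarrow$(i). Write $R(\l):=(\l I-A)^{-1}\geq0$; since $R(\l)$ is bounded and everywhere defined, $A$ is closed. Setting $\omega:=||Ae||_e$ gives $Ae\leq\omega e$, hence $(\l I-A)e\geq(\l-\omega)e$, and applying the positive operator $R(\l)$ yields $0\leq R(\l)e\leq(\l-\omega)^{-1}e$ for $\l>\omega$. By the norm identity $||R(\l)||=||R(\l)e||_e$ recorded before the theorem this gives $||R(\l)||\leq(\l-\omega)^{-1}$, so $||(\l-\omega)^nR(\l)^n||\leq1$ for all $n$, and the Hille--Yosida theorem (for the equivalent order unit norm) produces a $C_0$ semigroup $T(t)$ generated by $A$. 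Positivity of $T(t)$ follows from the exponential formula $T(t)x=\lim_{n\to\infty}\big(\tfrac{n}{t}R(\tfrac{n}{t})\big)^nx$: each factor is a nonnegative multiple of a positive resolvent, so every approximant is positive, and positivity passes to the limit because $X^+$ is closed.
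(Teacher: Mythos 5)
Your proposal is correct and follows essentially the same route as the paper: the identical implication cycle, with the key step (iii)$\Rightarrow$(iv) obtained exactly as in the paper by applying Theorem \ref{qint} to $A-\l I$ with order unit solution $e$ and $z=\l e-Ae$, and positivity of the semigroup deduced from the same Euler/exponential formula for $T(t)$. The only cosmetic differences are that you prove (i)$\Rightarrow$(ii) directly rather than citing the literature, and that in (iv)$\Rightarrow$(i) you invoke the quasi-contraction form of Hille--Yosida via the estimate $||R(\l)||\leq(\l-\omega)^{-1}$ with $\omega=||Ae||_e$, whereas the paper shifts to $A-||Ae||_e I$, applies the contraction version, and then rescales the semigroup by $e^{t||Ae||_e}$ --- the underlying estimates ($Ae\leq||Ae||_e e$ plus the norm identity $||B||=||Be||_e$ for positive $B$) are the same.
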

\begin{proof} ${\rm(i)}\Rightarrow{\rm(ii)}$ is straightforward, see e.g. \cite[7.18]{Clem}, and 
${\rm(ii)}\Rightarrow{\rm(iii)}$ is obvious from definitions.

${\rm(iii)}\Rightarrow{\rm(iv)}$ Let $e\in\D_A\cap\mathrm{int\,}X^+$, which is non-empty since $A$ is densely defined. Then $Ae\leq||Ae||_ee$ by definition of the order unit norm. Taking $\l>||Ae||_e$ we have 
$(A-\l I)e=:-z\leq-\varepsilon e$, where $\varepsilon:=\l-||Ae||_e>0$ and $z\geq\varepsilon e$ is itself an order unit.
Since somewhere positive off-diagonal property is preserved under subtraction of scalars $A-\l I$ satisfies all the assumptions of Theorem \ref{qint}. Therefore, $-(A-\l I)^{-1}=(\l I-A)^{-1}\geq0$.

${\rm(iv)}\Rightarrow{\rm(i)}$ Again let $e\in\D_A\cap\mathrm{int\,}X^+$, and assume for the moment that $Ae\leq0$. Since $(\l I-A)^{-1}=\l^{-1}(I-\l^{-1}A)^{-1}$ we conclude that $(I-\alpha A)^{-1}\geq0$ for small positive $\alpha$. Together with $Ae\leq0$ this implies $(I-\alpha A)\,e=e-\alpha Ae\geq e$ and $0\leq(I-\alpha A)^{-1}e\leq e$. Since the order unit norm is monotone 
$||(I-\alpha A)^{-1}e||_{e}\leq||e||_{e}=1$. Therefore, $||(I-\alpha A)^{-1}e||=||(I-\alpha A)^{-1}e||_{e}\leq1$, and by the Hille-Yosida theorem $A$ generates a contraction $C_0$ semigroup $T(t)$ on $X$ \cite[II.3.5]{EN}. Moreover, $\ds{T(t)=s\text{\,--\!}\lim_{n\to\infty}\Big(I-\frac{t}{n}A\Big)^{-n}\geq0}$. 

If $Ae\not\leq0$ we can argue as above with $A-||Ae||_{e}I$ in place of $A$. Indeed, 
$(A-||Ae||_{e}I)e\leq0$ by definition of the order unit norm, and 
$$
\Bigl(\lambda I-(A-||Ae||_{e}I)\Bigr)^{-1}=\Bigl((\lambda+||Ae||_{e})I-A\Bigr)^{-1}\geq0
$$
for large positive $\lambda$. Therefore, $A-||Ae||_{e}I$ generates a contraction $C_0$ semigroup on $X$. But then $A$ generates the $C_0$ semigroup $e^{t||Ae||_{e}}T(t) \geq0$ and we are done.
\end{proof}
\noindent The idea of proving ${\rm(iv)}\Rightarrow{\rm(i)}$ is similar to Arendt's \cite{Ar}, but our approach is more elementary. The most novel part of the proof is ${\rm(iii)}\Rightarrow{\rm(iv)}$, where we replaced the use of half-norms and subdifferentials with an application of our Theorem \ref{qint}. 

It follows from Theorem \ref{ACK} that the somewhere positive off-diagonal property together with the range condition yield the usual positive off-diagonal property. Since for $n\times n$ matrices the range condition follows from $\Ker A=\{0\}$ Theorem \ref{qint} implies that for them the two properties are equivalent. This is in contrast with somewhere positivity, which is strictly weaker even for matrices. Note however, that somewhere positive off-diagonal property is easier to verify in examples since one does not have to deal with arbitrary elements of $X^{*+}$.

\end{document}